 \font\tenmsb=msbm10 scaled\magstep
\newcommand{\captionfonts}{\tiny}
\long\def\@makecaption#1#2{%
  \vskip\abovecaptionskip
  \sbox\@tempboxa{{\captionfonts #1: #2}}%
  \ifdim \wd\@tempboxa >\hsize
    {\captionfonts #1: #2\par}
  \else
    \hbox to\hsize{\hfil\box\@tempboxa\hfil}%
  \fi
  \vskip\belowcaptionskip}
\newtheorem{theorem}{Theorem}[section]
\theoremstyle{definition}
\newtheorem{definition}[theorem]{Definition}
\newtheorem{example}[theorem]{Example}
\theoremstyle{remark}
\newtheorem{remark}[theorem]{Remark}
\numberwithin{equation}{section}
\newcommand{\ip}[2]{\langle #1 , #2 \rangle}    %this is to get the correct brackets for inner product
\newcommand{\bra}[1]{\langle #1 |}    %this is to get the correct brackets for a bra
\newcommand{\ket}[1]{| #1 \rangle}    %this is to get the correct brackets for a ket
\newcommand{\m}{\mathcal }
\newcommand{\tr}{\operatorname{Tr} }
\def\C{\mathbb C}    %%%%%%%%% the set of complex numbers
\def\rank{\operatorname{rank}}   %%%%%%%%% rank
\begin{document}

\title{Private Quantum Codes: Introduction and Connection with Higher Rank Numerical Ranges}

\author[D.~W.~Kribs and S.~Plosker]{David W.\ Kribs$^{*}$$^{\ddagger}$ and Sarah Plosker$^{*}$$^{\S}$\address{$^\S$Corresponding author. Email: ploskers@brandonu.ca}
\\\vspace{6pt} $^{*}${\em{Department of Mathematics \& Statistics, University of Guelph, Guelph, ON, Canada N1G 2W1}};$^{\ddagger}${\em{Institute for Quantum Computing, University of Waterloo, Waterloo, ON, Canada N2L 3G1}} }

\maketitle

\begin{abstract}
We give a brief introduction to private quantum codes, a basic notion in quantum cryptography and key distribution. Private code states are characterized by indistinguishability of their output states under the action of a quantum channel, and we show that higher rank numerical ranges can be used to describe them. We also show how this description arises naturally via conjugate channels and the bridge between quantum error correction and cryptography. \bigskip

\keywords{private quantum code, quantum cryptography, completely positive map, density operator, higher rank numerical range.\\
15A60, 47A12, 81P68, 81P94}\bigskip

\end{abstract}

\section{Introduction}

Private quantum codes were introduced as the quantum analogue of the classical one-time pad \cite{AMTW00,BR03}. They can be viewed from an operator theoretic perspective as input states that are indistinguishable under the action of a completely positive trace preserving map, or quantum channel \cite{CKPP11}. On the other hand, higher rank numerical ranges \cite{CKZ06a,CKZ06b,CGHK08,GLPS11, LNPST11, LP11,LPS08,LPS09,  Woe08} have been heavily studied recently with initial motivation coming from quantum error correction. 

In this paper we give a brief introduction to private quantum codes and show how they can be described in terms of higher rank numerical ranges. We also discuss connections with quantum error correction that naturally arise through the framework of conjugate quantum channels.

This paper is organized as follows. Section two discusses private quantum codes. The third section makes the connection with higher rank numerical ranges. Section four considers the Stinespring dilation theorem and some consequences in this setting, including the idea of conjugate channels and complementarity of quantum codes. We first review basic notation below.

We will restrict our attention to finite-dimensional  Hilbert spaces $\mathcal H_A$, $\mathcal H_B$ , which correspond to quantum  systems $A$ and $B$. The set of trace class operators  on $\mathcal H$ are denoted by  $\mathcal B(\mathcal H)_t$   and the set of bounded operators by $\mathcal B(\mathcal H)$. In finite dimensions, these sets coincide, and so, unless we want to draw specific attention to which sets we're working with,  we'll simply write $\mathcal L(\mathcal H)$ for both sets. We will write $X, Y$ for operators in $\mathcal B(\mathcal H)$, and $\rho, \sigma$ for density operators in $\mathcal B(\mathcal H)_t$.

Given a linear map $\Phi: \mathcal B(\mathcal H_A)_t \rightarrow \mathcal B(\mathcal H_B)_t$, its dual map $\Phi^\dagger: \mathcal B(\mathcal H_B) \rightarrow \mathcal B(\mathcal H_A)$ is defined via the Hilbert-Schmidt inner product: it is the unique map $\Phi^\dagger$ satisfying $\tr (\rho\, \Phi^\dagger(X) ) = \tr (\Phi(\rho) X)$ for all $X\in \mathcal B(\mathcal H_B)$ and all $\rho\in \mathcal B(\mathcal H_A)_t$.  Quantum channels are described by completely positive trace preserving linear (CPTP) maps. The dual of a CPTP map is a unital completely positive linear (UCP) map. The Kraus operators of a channel $\Phi$ are the operators $\{V_i\}$ given by $\Phi(\rho)=\sum_iV_i\rho V_i^*\,\, \forall \rho$. This decomposition of $\Phi$ is not unique, however, in general, results do not depend on the choice of Kraus operators.
				
We will use $^*$ to denote the complex conjugate transpose of an operator's matrix representation. We will use Dirac (bra-ket) notation: a unit column vector in $\m H$ will be denoted $\ket{\psi}$, its dual (row) vector $\ket{\psi}^*$ will be denoted $\bra{\psi}$, and the rank-one projection associated to $\ket{\psi}$ is its outer product $\ket{\psi}\bra{\psi}$. A mixed state is a convex combination of rank-one projections. We call mixed states and outer products of pure states \emph{density operators}, which are precisely the trace-one positive operators.

\section{Private Quantum Codes}

The starting point for private quantum codes is typically presented as follows \cite{AMTW00, BR03}: Alice wishes to send a quantum state to Bob without an eavesdropper, Eve, being able to learn any information about the state. A set of keys $\{1, \dots, N\}$, together with a set of unitaries $\{U_i\}$ and a probability distribution $\{p_i\}$, are all shared publicly. Each key $i\in \{1, \dots, N\}$ corresponds to the encoding $\rho\mapsto U_i\rho U_i^*$---an event which occurs with probability $p_i$.   Alice and Bob share a key, $i_0$, privately and Alice applies $U_{i_0}$ to her message. Bob receives the output message $\rho_0$, and, knowing $i_0$, can undo Alice's operation to recover the original message. Because the output message $\rho_0$ is independent of the input $\rho$, this is secure against Eve, who does not know $i_0$.

Indeed, without further information, Eve's description of the situation is given by the random unitary channel $\Phi(\rho)=\sum_ip_iU_i \rho U_i^*=\rho_0$. In particular, if $\Phi$ maps distinct input states to the same state $\rho_0$, then Eve will not be able to distinguish between the states. This leads to the following formalization of the notion of private states.

\begin{definition}
Let $\mathcal S\subseteq \mathcal H_A$ be a set of pure states, let $\Phi:\mathcal L(\mathcal H_A)\rightarrow \mathcal L(\mathcal H_B)$ be a CPTP map, and let $\rho_0 \in \mathcal L(\mathcal H_B)$. Then $\mathcal S$ is \emph{private} for $\Phi$ with output state $\rho_0$ if 
\[
\Phi (\ket{\psi}\bra{\psi})=\rho_0 \quad \forall \ket{\psi}\in\mathcal S.
\]
If every state in the subspace spanned by  $\mathcal S$ has this property, then $\mathcal S$ is called a \emph{private subspace code} for $\Phi$.
\end{definition}

Often both the channel $\Phi$ itself, as well as the triple $[\m S, \Phi, \rho_0]$ from the above definition are called a  \emph{private quantum channel}. As in quantum error correction, it is desirable to ask that the private states in question form a subspace (i.e., a private subspace), so that arbitrary superpositions of states can be encoded as input states.

As an illustration, we discuss the single qubit case with the identity operator as output state. See \cite{CKPP11} for further details. Recall \cite{NC00} that any single qubit pure state $\ket{\psi}$ can be written as
\begin{eqnarray*}
\ket{\psi}=
\cos\frac\theta2 \begin{pmatrix} 1 \\ 0 \end{pmatrix}  +e^{i\varphi}\sin\frac\theta2 \begin{pmatrix} 0 \\ 1 \end{pmatrix}.
\end{eqnarray*}

We associate $\ket{\psi}$ with the point $(\theta, \varphi)$, in spherical coordinates, on the Bloch sphere (the unit 2-sphere) via $\alpha =\cos\left(\frac\theta2\right)$ and $\beta =e^{i\varphi}\sin\left(\frac\theta2\right)$. The associated Bloch vector is $\vec{r}=(\cos\varphi\sin\theta, \sin\varphi\sin\theta, \cos\theta)$. Using the Bloch sphere representation, we can associate
to any single qubit density operator $\rho$ a Bloch vector $\vec{r}\in\mathbb{R}^3$ satisfying $\|\vec{r}\|\leq{1}$, where
\begin{equation*}\label{eq:paulidecompofrho}
\rho=\frac{I+\vec{r}\cdot    \vec{\sigma} }2.
\end{equation*}
We use $\vec{\sigma}$ to denote the Pauli vector; that is, $\vec{\sigma}=(\sigma_x,\sigma_y, \sigma_z)^T$, where $\sigma_x,\sigma_y, \sigma_z$ are the Pauli $X, Y$, and $Z$ matrices, respectively.

%\frame{
%\frametitle{Single Qubit Private Codes}
%Using the Bloch sphere representation, we can associate
%to any density matrix $\rho\in \mathbb{M}_2$ a Bloch vector $\vec{r}\in\mathbb{R}^3$ satisfying %$\|\vec{r}\|\leq{1}$, where
%\begin{equation}\label{eq:paulidecompofrho}
%\rho=\frac{I+\vec{r}\cdot    \vec{\sigma} }2.
%\end{equation}

%We use $\vec{\sigma}$ to denote the Pauli vector; that is, $\vec{\sigma}=(\sigma_x,\sigma_y, \sigma_z)^T$.

%Note that a state is pure if and only if $\|\vec{r}\|=1$ and that the maximally mixed state $\frac {{\rm 1\hspace*{-0.3ex}\rule{.075ex}{1.14ex}\hspace*{0.15ex}}}{n}$ has Bloch vector $\vec{r}=\vec{0}$.
%}

Every unital qubit channel $\Phi$ can be represented as \cite{KR01} 
\begin{equation}\label{eq:channeldecomp}
	 \Phi\left(\frac1{2}\left[I+\vec{r}\cdot\vec{\sigma}\right]\right)=\frac1{2}\left[I+(T\vec{r})\cdot\vec{\sigma}\right],
\end{equation}
where $T$ is a $3 \times 3$ real matrix that represents a deformation of the Bloch sphere.
%End of stokes' parametrization

We are interested in cases where the private code $\mathcal S$ is nonempty. This is easily seen to occur precisely when $T$ in equation (\ref{eq:channeldecomp}) has non-trivial nullspace; we therefore consider the cases when the subspace of vectors $\vec{r}$ such that $T \vec{r} = 0$ is one, two, or three-dimensional. The following result summarizes the situation. Notice that case~(3) yields the only private subspace in the single qubit case. 

\begin{theorem}
Let $\Phi:\m L(\C^2\otimes \C^2)\rightarrow\m L(\C^2\otimes \C^2)$ be a unital qubit channel, with $T$ the mapping induced by $\Phi$ as in equation (\ref{eq:channeldecomp}). Then there are three possibilities for a private quantum channel $[\mathcal S, \Phi, \frac12 I]$ with $\mathcal S$ nonempty:
\begin{enumerate}
\item If the nullspace of $T$ is 1-dimensional, then $\mathcal S$ consists of a pair of orthonormal states.
\item If the nullspace of $T$ is 2-dimensional, then the set $\mathcal S$ is the set of all trace vectors (see below) of the subalgebra $U^*\Delta_2 U$ of $2\times 2$ diagonal matrices up to a unitary equivalence.
\item If the nullspace of $T$ is 3-dimensional, then $\Phi$ is the completely depolarizing channel (i.e., $\Phi(\rho) = \frac12 I$ $\forall \rho$) and $\mathcal S$ is the set of all unit vectors. In other words, $\mathcal S$ is the set of all trace vectors of $\mathbb{C}\cdot I_2$.
\end{enumerate}
\end{theorem}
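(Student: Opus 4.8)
The plan is to translate the private-state condition $\Phi(\ket{\psi}\bra{\psi}) = \frac12 I$ into a statement about Bloch vectors and then analyze the geometry of $\ker T$ in each of the three cases. First I would record the governing equivalence: the output state $\frac12 I$ has Bloch vector $\vec 0$, so by equation~(\ref{eq:channeldecomp}) a pure state $\ket{\psi}$ with Bloch vector $\vec r$ lies in $\m S$ if and only if $T\vec r = \vec 0$. Since every pure state has a unit Bloch vector, this shows that $\m S$ is exactly the set of pure states whose Bloch vectors are the unit vectors in $\ker T$. The entire theorem then reduces to describing the intersection of the unit sphere with $\ker T$ and pulling these vectors back to states.

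For case~(1), the unit vectors in a one-dimensional nullspace form a single antipodal pair $\pm\vec n$. I would invoke the standard fact that antipodal Bloch vectors correspond to orthogonal pure states (passing from $\ket{\psi}$ to $\ket{\psi^\perp}$ sends $\vec r$ to $-\vec r$), which immediately gives that $\m S$ consists of two orthonormal states. Case~(3) is the degenerate extreme: a three-dimensional nullspace forces $T = 0$, so $\Phi(\rho) = \frac12 I$ for every $\rho$, the channel is completely depolarizing, and every unit vector is private. Since the trace-vector condition $\bra{\psi} A \ket{\psi} = \frac12\tr A$ for $\m A = \mathbb C\cdot I_2$ holds trivially for all unit vectors, $\m S$ is exactly the set of trace vectors of $\mathbb C\cdot I_2$.

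Case~(2) is the substantive one, and I expect it to be the main obstacle. Here the unit vectors in the two-dimensional nullspace sweep out a great circle, and I would identify this circle with a set of trace vectors in two steps. First, a direct computation shows that a unit vector $(\alpha,\beta)^T$ is a trace vector for the diagonal algebra $\Delta_2$ precisely when $|\alpha|^2 = |\beta|^2 = \frac12$, and these are exactly the equatorial states of the Bloch sphere. Second, using the surjectivity of the covering map $SU(2)\to SO(3)$, there is a unitary $U$ whose induced rotation carries $\ker T$ onto the $xy$-plane, so that the great circle is the image under $U^*$ of the equator; since $U^*$ applied to the trace vectors of $\Delta_2$ yields precisely the trace vectors of $U^*\Delta_2 U$, this identifies $\m S$ up to unitary equivalence with the trace vectors of a conjugated diagonal subalgebra. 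The delicate points are pinning down the trace-vector characterization of the equator and correctly invoking the $SU(2)\to SO(3)$ correspondence to reduce an arbitrary two-dimensional nullspace to this canonical position.
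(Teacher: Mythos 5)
Your argument is correct. Note that the paper itself does not supply a proof of this theorem---it defers to \cite{CKPP11}---so there is nothing to compare line by line; but your reduction is exactly the natural one: since $\frac12 I$ has Bloch vector $\vec 0$ and $\Phi$ is unital, equation~(\ref{eq:channeldecomp}) identifies $\m S$ with the unit vectors of $\ker T$, and the three cases are the antipodal pair, the great circle, and the full sphere. The computations you flag as delicate both check out: a unit vector $(\alpha,\beta)^T$ is a trace vector of $\Delta_2$ iff $|\alpha|^2=|\beta|^2=\frac12$, which is precisely the equator ($\cos\theta = |\alpha|^2-|\beta|^2 = 0$), and the surjectivity of $SU(2)\to SO(3)$ together with the covariance $\bra{\psi}U^*aU\ket{\psi} = \bra{U\psi}a\ket{U\psi}$ correctly transports the equatorial trace vectors of $\Delta_2$ to the trace vectors of $U^*\Delta_2 U$ supported on $\ker T$. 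The only point worth stating explicitly in a final write-up is the orientation of the correspondence (the Bloch vector of $U\ket{\psi}$ is $R_U\vec r$, so one chooses $U$ with $R_U(\ker T)$ equal to the $xy$-plane and then pulls back by $U^*$), which you have implicitly right.
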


Trace vectors are studied in the field of matrix theory, and were  initially introduced in the  work of Murray and von Neumann \cite{MvN37}. If  $\m A$ is a $*$-subalgebra of $\m L(\m H_n)$, then a vector $\ket{v}$ is a
\emph{trace vector} of $\m A$ if
\[
\bra{v}a\ket{v}=\frac1n\tr a \quad \forall a\in \m A.
\]

\section{Connection with Higher-Rank Numerical Ranges}

In the following result we derive a characterization of private quantum codes in terms of the dual map of a channel. 

			\begin{theorem}
\label{thm:KLAnalogue} Let $\Phi: \mathcal L(\mathcal H_A) \rightarrow \mathcal L(\mathcal H_B)$ be a CPTP map. Then a subspace $\m C$ of $\mathcal H_A$ is private for $\Phi$ with output state $\rho_0$; i.e., $\Phi(\rho) = \rho_0$ for all $\rho\in\mathcal L(\m C)$, if and only if for any $X \in \m L(\mathcal H_B)$  there exists a $\lambda_X\in \C$ such that
\[
P_{\m C} \Phi^\dagger (X) P_{\m C} = \lambda_{X} P_{\m C},
\]
where $P_{\m C}$ is the projection onto $\m C$. And in this case, $\lambda_{X} = \tr (\rho_0 X)$.
 \end{theorem}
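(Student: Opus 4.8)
The plan is to prove the statement as a chain of equivalences, with the defining relation $\tr(\rho\,\Phi^\dagger(X)) = \tr(\Phi(\rho)X)$ of the dual map as the pivot, and a polarization argument supplying the final (and only delicate) link. Since the whole chain consists of biconditionals, both directions of the theorem follow simultaneously, and the value $\lambda_X = \tr(\rho_0 X)$ is read off along the way.

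First I would reduce the privacy hypothesis to pure states. Every density operator supported on $\m C$ is a convex combination of rank-one projections $\ket{\psi}\bra{\psi}$ with $\ket{\psi}\in\m C$, so by linearity of $\Phi$ the condition $\Phi(\rho)=\rho_0$ for all $\rho\in\m L(\m C)$ is equivalent to $\Phi(\ket{\psi}\bra{\psi})=\rho_0$ for every unit vector $\ket{\psi}\in\m C$. Next I would use non-degeneracy of the Hilbert--Schmidt pairing: two operators in $\m L(\m H_B)$ coincide if and only if they have equal trace against every $X\in\m L(\m H_B)$. Hence the pure-state condition is equivalent to $\tr(\Phi(\ket{\psi}\bra{\psi})X)=\tr(\rho_0 X)$ for all such $\ket{\psi}$ and all $X$. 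Invoking the dual-map identity $\tr(\ket{\psi}\bra{\psi}\,\Phi^\dagger(X))=\tr(\Phi(\ket{\psi}\bra{\psi})X)$ and rewriting the left side as the quadratic form $\bra{\psi}\Phi^\dagger(X)\ket{\psi}$, and setting $\lambda_X:=\tr(\rho_0 X)$, the privacy condition becomes: for every $X$, the map $\ket{\psi}\mapsto\bra{\psi}\Phi^\dagger(X)\ket{\psi}$ is constant equal to $\lambda_X$ on the unit sphere of $\m C$.

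It then remains to match this constancy with the compression identity. Because $P_{\m C}\ket{\psi}=\ket{\psi}$ for $\ket{\psi}\in\m C$, we have $\bra{\psi}\Phi^\dagger(X)\ket{\psi}=\bra{\psi}P_{\m C}\Phi^\dagger(X)P_{\m C}\ket{\psi}$, so the claim reduces to the linear-algebra fact that an operator $B$ supported on $\m C$ satisfies $\bra{\psi}B\ket{\psi}=\lambda_X$ for all unit $\ket{\psi}\in\m C$ if and only if $B=\lambda_X P_{\m C}$. The easy direction is immediate; the forward direction, applied to $C=B-\lambda_X P_{\m C}$, amounts to showing that a quadratic form vanishing on all of $\m C$ forces $C=0$.

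This last point is where I expect the only genuine subtlety, and it is precisely where working over $\C$ is essential. By the complex polarization identity the diagonal values $\bra{\xi}C\ket{\xi}$ determine every off-diagonal matrix element $\bra{\phi}C\ket{\psi}$, so vanishing of the quadratic form yields $C=0$ and hence $P_{\m C}\Phi^\dagger(X)P_{\m C}=\lambda_X P_{\m C}$. I would emphasize that $\Phi^\dagger(X)$ need not be Hermitian, since $X$ need not be, so the real polarization controlling only the Hermitian part is insufficient and the full complex identity is required. Tracing the chain of equivalences in reverse completes the proof, with $\lambda_X=\tr(\rho_0 X)$ identified as claimed.
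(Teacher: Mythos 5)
Your argument is correct, and both directions go through: the pivot is the duality relation $\tr(\rho\,\Phi^\dagger(X))=\tr(\Phi(\rho)X)$, exactly as in the paper, but you organize the equivalence differently. The paper fixes an orthonormal basis $\{\ket{\phi_k}\}$ of $\m C$, expands $P_{\m C}\Phi^\dagger(X)P_{\m C}$ over the matrix units $\ket{\phi_k}\bra{\phi_\ell}$, and uses the linear extension of privacy, $\Phi(A)=\tr(A)\rho_0$ for all $A\in\m L(\m C)$, to kill the off-diagonal terms; you instead work basis-free with the quadratic form $\ket{\psi}\mapsto\bra{\psi}\Phi^\dagger(X)\ket{\psi}$ and recover the off-diagonal information through the complex polarization identity. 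These are really the same computation in different clothes --- the paper's claim that $\Phi(\ket{\phi_\ell}\bra{\phi_k})=0$ for $k\neq\ell$ is itself justified by writing the matrix unit as a polarization-type combination of rank-one projections in $\m C$ --- but your version buys two things: it makes the link to the numerical-range definition (constancy of the compression value on unit vectors) completely explicit, and it avoids the slightly delicate step in the paper's converse where one must argue that the output state obtained from one orthonormal basis agrees with that from another. Your remark that $\Phi^\dagger(X)$ need not be Hermitian, so that the full complex polarization identity is genuinely needed, is a correct and worthwhile point of care. One small thing to spell out when writing this up: in the converse the hypothesis supplies only the scalars $\lambda_X$, not $\rho_0$, so you should fix a unit vector $\ket{\psi_0}\in\m C$, define $\rho_0:=\Phi(\ket{\psi_0}\bra{\psi_0})$, and observe that $\lambda_X=\tr(\rho_0X)$ before running your chain backwards. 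Note also that the paper gives a second, structurally different proof via conjugate channels and the Knill--Laflamme conditions; your argument is a variant of the first, direct proof, not of that one.
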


\begin{proof}
Choose an orthonormal set $\{\ket{\phi_k}\}$ inside $\m H_A$ such that
$P_{\m C}=\sum_{k=1}^{\rank P_{\m C}}\ket{\phi_k}\bra{\phi_k}$. Let us first assume that $\Phi$ is private on $\m C$. By manipulating the bra-ket notation, and using the definition of $\Phi^\dagger$, we find
\begin{eqnarray*}
P_{\m C}\Phi^\dagger(X)P_{\m C}&=&\sum_{k,\ell=1}^{\rank P_{\m C}}\ket{\phi_k}\bra{\phi_k}\Phi^\dagger(X)\ket{\phi_\ell}\bra{\phi_\ell}\\
&=&\sum_{k,\ell}\bra{\phi_k}\Phi^\dagger(X)\ket{\phi_\ell}\ket{\phi_k}\bra{\phi_\ell}\\
&=&\sum_{k,\ell}\tr\left(\Phi^\dagger(X)\ket{\phi_\ell}\bra{\phi_k}\right)\ket{\phi_k}\bra{\phi_\ell}\\
&=&\sum_{k,\ell}\tr\left(X\,\Phi\left(\ket{\phi_\ell}\bra{\phi_k}\right)\right)\ket{\phi_k}\bra{\phi_\ell}\\
&=&\sum_{k,\ell}\tr\left(X\rho_0\right)\tr(\ket{\phi_k}\bra{\phi_\ell})\ket{\phi_k}\bra{\phi_\ell}\\
&=&\sum_{k}\tr\left(X\rho_0\right)\ket{\phi_k}\bra{\phi_k} = \tr\left(X\rho_0\right) P_{\m C},
\end{eqnarray*}
where the fifth equality follows because $\ket{\phi_\ell}\bra{\phi_k}\in \m L(\m C)$ and $\Phi$ is trace-preserving, so that $\Phi(\ket{\phi_\ell}\bra{\phi_k})=\tr(\ket{\phi_\ell}\bra{\phi_k})\rho_0$. 
Defining $\lambda_X:=\tr\left(X\rho_0\right)$ completes this direction of the proof.

For the other direction, we assume $P_{\m C}\Phi^\dagger(X)P_{\m C}=\lambda_XP_{\m C}$ for all $X \in \m L(\m H_B)$. Similar to the above calculation, we can write this as
\begin{eqnarray*}
\sum_{k}\tr\left(X\Phi\left(\ket{\phi_k}\bra{\phi_k}\right)\right)\ket{\phi_k}\bra{\phi_k} = \lambda_X\sum_{k}\ket{\phi_k}\bra{\phi_k}.
\end{eqnarray*}
For each $k$, compressing this equation by the rank one projection $\ket{\phi_k}\bra{\phi_k}$ yields
\[
\lambda_X=\tr\left(X\Phi\left(\ket{\phi_k}\bra{\phi_k}\right)\right)\quad \forall X \in \m L(\m H_B).
\]
The left hand side of the above equation does not depend on $k$, and so  $\Phi\left(\ket{\phi_k}\bra{\phi_k}\right)$ is a fixed density operator, say $\rho_0$, independent of $k$. And as the basis $\{ \ket{\phi_k} \}$ for $\m C$ was arbitrary it follows that $\Phi(\rho) = \rho_0$ for all $\rho\in \m L(\m C)$. Thus $\m C$ is private for $\Phi$ and this completes the proof.
\end{proof}
 
Note that if $\{\ket{e_i}\}$ is an orthonormal basis for $\m H_B$, then the scalars $\lambda_X$ obtained from the  matrix units  $\{\ket{e_i}\bra{e_j}\}$ form a Hermitian matrix; in particular the matrix coefficients for the output state $\rho_0$ in this basis. We also note that in many cases of interest, such as the class of Pauli channels for instance, the channel $\Phi$ is self-dual; that is, $\Phi = \Phi^\dagger$.

\subsection{Higher Rank Numerical Ranges}

The study of higher rank numerical ranges was initiated as an effort to help formalize the mathematical underpinnings for quantum error correction \cite{CKZ06a,CKZ06b}, and now the topic has taken on a life of its own with a rich collection of ongoing investigations. See \cite{CGHK08,GLPS11,LP11,LPS08,LPS09,Woe08}  as examples.

\begin{definition} The \emph{rank-$k$ numerical range} of an $n\times n$ matrix $X$ is
\[
\Lambda_k(X) = \{\lambda\in\C \,:\, PXP = \lambda P \textnormal{ for some  rank-$k$} \textnormal{ orthogonal projection } P\}.
\]
\end{definition}

The elements $\lambda\in \Lambda_k(X)$ are referred to as \emph{compression-values} for $X$, as they are obtained through compressions of $X$ to $k$-dimensional
subspaces. The case $k = 1$ reduces to the
numerical range $W(X)$ for operators (see, e.g.\ \cite{HJ91});
\[
\Lambda_1(X) =W(X) = \left\{\ip{X\psi}{\psi} \mid \ket{\psi}\textnormal{ any state in }\m H \right\}.
 \]
We refer to the sets $\Lambda_k(X)$,
$k > 1$, as the \emph{higher-rank numerical ranges} of $X$.

As discussed in \cite{CKZ06a,CKZ06b} and noted below, the
problem of finding correctable codes for a given quantum channel $\m  E (\rho) = \sum_i E_i \rho E_i^*$ is equivalent to finding the compression values inside the higher-rank
numerical ranges $\Lambda_k(E_i^* E_j)$ for all $i, j$ and all $k > 1$, along with the corresponding
projections.

\begin{remark}
Correspondingly, we make the following observation as a consequence of Theorem~\ref{thm:KLAnalogue}:
The problem of finding private codes for a given quantum channel $\Phi(\rho) = \sum_i V_i \rho V_i^*$ is equivalent to finding the compression values inside the higher-rank
numerical ranges $\Lambda_k(\Phi^\dagger(X))$ for all $X$ and all $k > 1$, along with the corresponding
projections. The output state $\rho_0$ is determined by its expectation values $\lambda_{X}=\tr (\rho_0 X)$. In other words, effectively the higher rank numerical ranges of images of a UCP map describe private quantum codes for the corresponding CPTP dual of the map. 
\end{remark}

\section{Stinespring Dilation Theorem \& Some Consequences}

        \subsection{Heisenberg \& Schr\"{o}dinger Dual Pictures}

Suppose that $\Phi: \mathcal B(\mathcal H_A)_t \rightarrow \mathcal B(\mathcal H_B)_t$ is a CPTP map.
Then there is a Hilbert space $\mathcal K$ (of dimension $K$ at most $\dim(\m H_A)\dim(\m H_B)$), a partial isometry $U\in \mathcal B(\mathcal H_A\otimes \mathcal K, \mathcal H_B \otimes \mathcal K)$, and a pure state $\ket{\psi}\in\mathcal K$ such that
\begin{eqnarray}\label{eqn:stinespring}
\Phi (\rho) = \tr_\mathcal K (U (\rho \otimes \ket{\psi}\bra{\psi})U^*) \quad\quad \forall \rho.
\end{eqnarray}
This is the Schr\"{o}dinger picture for the (discrete) time evolution of quantum states. If we define $V \ket{\phi} := U(\ket{\phi}\otimes\ket{\psi})$ for all pure states $\ket{\phi}\in\mathcal H_A$, for some fixed pure state $\ket{\psi}\in\mathcal K$, then we can write equation (\ref{eqn:stinespring}) more succinctly as $\Phi (\rho) = \tr_\mathcal K (V \rho V^*)$. In this way, the general form for $U$ is
\[
U = \left[ \begin{matrix} V& \vline & \ast \end{matrix} \right].
\]

The pair $(V, \m K)$ is called a Stinespring dilation of $\Phi$; assuming $\m K$ is minimal, $V$ is unique up to a unitary on $\mathcal K$. Here the Kraus operators for $\Phi$ can be read off as the coordinate operators of $V$; with 
$V =  \begin{pmatrix} V_1, \dots,  V_{K} \end{pmatrix}^T$,
where the Choi/Kraus decomposition of $\Phi$ is
\[
\Phi(X)=\sum_{i=1}^K V_i XV_i^*\quad \forall X, \textnormal{ where } V_i\in\mathcal B(\m H_A, \m H_B).
\]

A fundamental tool in quantum information -- the ``purification of mixed states'' -- can be viewed as a special case. Fix a density operator $\rho_0 \in \mathcal B(\mathcal H)_t$, and consider the CPTP map $\Phi : \mathbb{C} \rightarrow \mathcal B(\mathcal H)_t$ defined by
\[
\Phi (c \cdot 1) = c \, \rho_0 \quad \quad \forall c\in \mathbb{C}.
\]
Then the Stinespring Theorem gives (here $\mathcal K = \mathbb{C} \otimes \mathcal H = \mathcal H$):
\[
\rho_0= \Phi (1) = \tr_\mathcal K (U (1 \otimes \ket{\psi}\bra{\psi})U^*) = \tr_\mathcal K (\ket{\psi^\prime}\bra{\psi^\prime}),
\]
where $\ket{\psi'}\in\mathcal H\otimes\mathcal H$ is a ``purification'' of $\rho_0$ -- and the unitary freedom in the theorem captures all purifications. The idea that, given a mixed state in a Hilbert space, we can consider it to correspond to a pure state in a larger Hilbert space, is referred to as the \emph{Church of the Larger Hilbert Space} in quantum information theory.

The dual Heisenberg picture describes the time evolution of observables and is given as follows. Suppose that $\Phi^\dagger: \mathcal B(\mathcal H_B) \rightarrow \mathcal B(\mathcal H_A)$ is a UCP map.
    Then there is a Hilbert space $\mathcal K$ (of dimension $K$ at most $\dim(\m H_A)\dim(\m H_B)$) and an isometry $V\in \mathcal B(\mathcal H_A, \mathcal H_B\otimes \mathcal K)$ such that
\[
\Phi^\dagger (X) = V^*(X\otimes I_\mathcal K)V \quad\quad \forall X.
\]
%
%The two pictures are connected via $V \ket{\phi} := U(\ket{\phi}\otimes\ket{\psi})$ for all pure states $\ket{\phi}\in\m H_A$, which gives $\Phi (\rho) = \tr_\mathcal K (V \rho V^*)$.
%The Kraus operators for $\Phi$ are the coordinate operators $V_i$ from above, and the general form for $U$ is
%$U = \left[ \begin{matrix} V & \vline & \ast \end{matrix} \right]$.
	%	
The $V$ above is the same $V$ as in the Schr\"{o}dinger picture.

	%%%%%%%%%%%%%%%%%%%%%%%%%%%%%%%%%%%%%%%%
	\subsection{Conjugate/Complementary Channels}
	%%%%%%%%%%%%%%%%%%%%%%%%%%%%%%%%%%%%%%%%

The Stinespring theorem naturally motivates the concept of conjugate, or complementary, quantum channels \cite{Hol06, KMNR07}.	
			\begin{definition}
Given a CPTP map $\Phi: \mathcal L(\mathcal H_A) \rightarrow \mathcal L(\mathcal H_B)$, consider a Stinespring representation $V\in \mathcal L(\mathcal H_A, \mathcal H_B\otimes \mathcal K)$ and $\mathcal K$ for which
\[
\Phi (\rho) = \tr_\mathcal K (V \rho V^*).
\]
Then the corresponding \textbf{conjugate} (or \textbf{complementary}) channel is the CPTP map $\widetilde{\Phi}: \mathcal L(\mathcal H_A) \rightarrow \mathcal L(\mathcal K)$ given by
\[
\widetilde{\Phi} (\rho) = \tr_\mathcal B (V \rho V^*).
\]
            \end{definition}
Any two conjugates $\widetilde{\Phi}$, $\Phi^\prime$ obtained in this way are related by a partial isometry $W$ such that $\widetilde{\Phi}(\cdot) = W \Phi^\prime (\cdot) W^*$. And so we talk of ``the'' conjugate channel for $\Phi$ with this understanding.

Kraus operators for conjugate channels can be computed directly from the original channel. Suppose that $V_i\in\mathcal L(\mathcal H_A,\mathcal H_B)$ are the Kraus operators for $\Phi: \mathcal L(\mathcal H_A) \rightarrow \mathcal L(\mathcal H_B)$; i.e., $\Phi(\rho) = \sum_i V_i \rho V_i^*$. Then we can obtain Kraus operators $\{ R_\mu \}$ for $\widetilde{\Phi}$ as follows:
       	
       \strut

Fix a basis $\{\ket{e_i}\}$ for $\mathcal K$ and define for $\rho\in\mathcal L(\mathcal H_A)$,
\[
F(\rho) = \sum_{i,j} \ket{e_i}\bra{e_j} \otimes V_i \rho V_j^* \in \mathcal L(\mathcal K \otimes \mathcal H_B).
\]

Then $\Phi(\rho) = \tr_\mathcal K F(\rho)$ and
\[
\widetilde{\Phi}(\rho) = \tr_\mathcal B F(\rho) = \sum_{i,j} \tr (V_i \rho V_j^*) \ket{e_i}\bra{e_j} = \sum_\mu R_\mu \rho R_\mu^*,
\]
where $R_\mu^* = [V_1^* \ket{f_\mu} \, V_2^* \ket{f_\mu} \cdots]$ and $\{ \ket{f_\mu}\}$ is an orthonormal basis for $\mathcal H_B$.

If we choose the standard bases for $\m K$ and $\m H_B$, then the $\mu$-th Kraus operator $R_\mu$ is obtained by simply ``stacking'',  top-down,  the $\mu$-th row of each Kraus operator $V_i, i=1, \dots, K$.

%%%%%%%%%%%%%%%%%%%%%%%%%%%%%%%%%%%%%%%%
\subsection{Complementarity of Quantum Codes}
%%%%%%%%%%%%%%%%%%%%%%%%%%%%%%%%%%%%%%%%

A subspace $\m C$ of $\m H$ is an \emph{error-correcting code} for a channel $\m E$ if there exists an error correction operation (a channel) $\m R$ such that
\begin{equation}\label{qec}
(\m R\circ \m E)(\ket{\psi}\bra{\psi})=\ket{\psi}\bra{\psi} \quad \forall \ket{\psi} \in \m C.
\end{equation}
By linearity, Eq.~(\ref{qec}) holds when $\ket{\psi}\bra{\psi}$ is replaced by an arbitrary density operator $\rho \in \m L(\m C)$. The following is a basic result that connects studies in quantum error correction and quantum cryptography.

			\begin{theorem}\cite{KKS08}\label{thm:KKS08} Given a conjugate pair of CPTP maps $\Phi$, $\widetilde{\Phi}$, a code is an error-correcting code for one if and only if it is a private code for the other.
            \end{theorem}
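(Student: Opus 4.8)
The plan is to use the Stinespring picture as the common bridge between $\Phi$ and its conjugate $\widetilde{\Phi}$. Fix an isometry $V \in \m L(\m H_A, \m H_B \otimes \m K)$ with $\Phi(\rho) = \tr_\m K(V\rho V^*)$ and $\widetilde{\Phi}(\rho) = \tr_\m B(V\rho V^*)$. For a subspace $\m C \subseteq \m H_A$, the key object to track is the global output $V \rho V^*$ for $\rho \in \m L(\m C)$, since $\Phi$ and $\widetilde{\Phi}$ are just its two partial traces. The strategy is to show that both the error-correcting condition (for one channel) and the privacy condition (for the other) are equivalent to a single structural statement about how $V$ acts on $\m C$, so the biconditional falls out symmetrically.

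First I would translate error correction into the standard algebraic form. By the Knill--Laflamme conditions, $\m C$ is correctable for $\widetilde{\Phi}$ (with Kraus operators $\{R_\mu\}$) if and only if $P_\m C R_\mu^* R_\nu P_\m C = m_{\mu\nu} P_\m C$ for scalars $m_{\mu\nu}$ forming a density matrix. Next I would translate privacy using Theorem~\ref{thm:KLAnalogue}: $\m C$ is private for $\Phi$ if and only if $P_\m C \Phi^\dagger(X) P_\m C = \lambda_X P_\m C$ for all $X \in \m L(\m H_B)$, which via the Heisenberg form $\Phi^\dagger(X) = V^*(X \otimes I_\m K)V$ becomes $P_\m C V^*(X \otimes I_\m K) V P_\m C = \lambda_X P_\m C$. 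The main work is to recognize that both conditions are compressions of $V^* (\,\cdot\,) V$ to $\m C$, but against complementary factors: privacy for $\Phi$ compresses operators of the form $X \otimes I_\m K$, while correctability for $\widetilde{\Phi}$ (after expressing $R_\mu^* R_\nu$ through $V$) compresses operators of the form $I_\m B \otimes Y$. The content of the theorem is precisely that these two compression conditions coincide.

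The cleanest route is to introduce the compressed global state and study the operator $W := (P_\m B \otimes I_\m K)\,V P_\m C$, or equivalently analyze $P_\m C V^* (X \otimes Y) V P_\m C$ for all $X \in \m L(\m H_B)$, $Y \in \m L(\m K)$. I would argue that the correctability of $\widetilde{\Phi}$ forces $V P_\m C$ to have the tensor-product structure $V\ket{\phi} = \ket{\eta}\otimes(\text{something in }\m B)$ up to the appropriate isometric identification, which is exactly what makes the $\m K$-reduced state $\Phi(\rho)$ constant in $\rho$, i.e.\ privacy. Running the implication in both directions uses the symmetry of the two partial traces. The main obstacle will be handling this correctly when $\m C$ is not the full space and when $V$ is merely an isometry rather than unitary: I must be careful that the scalar data $m_{\mu\nu}$ from Knill--Laflamme assembles into precisely the density operator $\rho_0$ appearing on the privacy side, and that the dimensions $\dim \m B$, $\dim \m K$, $\dim \m C$ are compatible so that a genuinely nontrivial code exists. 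Establishing this dictionary between the Knill--Laflamme scalars and the private output state $\rho_0$ is the crux, and once it is in place the equivalence is a direct consequence of the symmetric roles of $\m B$ and $\m K$ under the two partial traces of the common dilation.
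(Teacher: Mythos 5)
Your overall plan---translate correctability of $\widetilde{\Phi}$ via the Knill--Laflamme conditions, translate privacy of $\Phi$ via Theorem~\ref{thm:KLAnalogue}, and connect the two through the common Stinespring isometry $V$---is exactly the machinery the paper assembles in its final subsection (there run in the other direction, using this theorem to rederive Theorem~\ref{thm:KLAnalogue}). But there is a concrete error in your execution that derails the argument. The Kraus operators of $\widetilde{\Phi}:\m L(\m H_A)\to\m L(\m K)$ are obtained by tracing out $\m H_B$, so they are indexed by a basis $\{\ket{f_\mu}\}$ of $\m H_B$ and satisfy $R_\mu=(\bra{f_\mu}\otimes I_{\m K})V$, whence
\[
R_\mu^* R_\nu \;=\; V^*\bigl(\ket{f_\mu}\bra{f_\nu}\otimes I_{\m K}\bigr)V \;=\; \Phi^\dagger\bigl(\ket{f_\mu}\bra{f_\nu}\bigr),
\]
which is Eq.~(\ref{eq:KrausConjDual}). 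These are operators of the form $V^*(X\otimes I_{\m K})V$ --- the \emph{same} tensor factor as in the privacy condition for $\Phi$, not the complementary factor $I_{\m B}\otimes Y$ as you claim. Consequently your stated crux, that ``the content of the theorem is precisely that these two compression conditions coincide'' (compressions of $X\otimes I_{\m K}$ versus compressions of $I_{\m B}\otimes Y$), is not the content of the theorem and is in fact false as a general equivalence: for the identity channel the whole space is correctable for $\Phi$ and private for $\widetilde{\Phi}$ (the $I_{\m B}\otimes Y$ condition) while the $X\otimes I_{\m K}$ compressions are certainly not scalars. The correct pairing is that correctability of $\widetilde{\Phi}$ and privacy of $\Phi$ both compress $X\otimes I_{\m K}$, and correctability of $\Phi$ and privacy of $\widetilde{\Phi}$ both compress $I_{\m B}\otimes Y$; with the identity above, the Knill--Laflamme condition $P_{\m C}R_\mu^* R_\nu P_{\m C}=m_{\mu\nu}P_{\m C}$ is \emph{literally} the condition $P_{\m C}\Phi^\dagger(\ket{f_\mu}\bra{f_\nu})P_{\m C}=\lambda_{\mu\nu}P_{\m C}$, which extends by linearity over matrix units to all $X$ and is privacy by Theorem~\ref{thm:KLAnalogue} (Hermiticity of $[m_{\mu\nu}]$ is automatic since $(R_\mu^*R_\nu)^*=R_\nu^*R_\mu$, and $\lambda_{\mu\nu}=\bra{f_\nu}\rho_0\ket{f_\mu}$ identifies the KL matrix with the output state). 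The equivalence is thus a two-line computation, not a structural coincidence to be established.

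Your proposed repair in the third paragraph---arguing that correctability of $\widetilde{\Phi}$ forces $VP_{\m C}$ to factor as $V\ket{\phi}=\ket{\eta}\otimes(\cdot)$---compounds the problem: that factorization statement \emph{is} essentially the theorem (it is what makes one partial trace constant in $\rho$), so invoking it without proof is circular, and it is unnecessary once the tensor factors are identified correctly. Strip out the third paragraph, fix the computation of $R_\mu^*R_\nu$, and your first two paragraphs become the paper's argument.
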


The extreme example of this phenomena is given by a unitary channel paired with the completely depolarizing channel---where the entire Hilbert space is the code. We illustrate this connection further with a pair of fairly simple examples.

		\begin{example}
Consider the 2-qubit swap channel $\Phi(\sigma\otimes\rho) = \rho \otimes \sigma$, which has a single Kraus operator, the swap unitary
\[
U=\begin{pmatrix} 1 & 0 & 0 & 0\\0 & 0 & 1 & 0\\ 0 & 1 & 0 & 0\\ 0 & 0& 0& 1 \end{pmatrix}. \]

\medskip
       	
The conjugate map $\widetilde{\Phi} : \mathcal L(\mathbb{C}^2\otimes\mathbb{C}^2) \rightarrow \mathbb{C}$ is implemented with four Kraus operators, which are
\begin{eqnarray*}
R_1 &=& \begin{pmatrix} 1 & 0 & 0 & 0  \end{pmatrix}
\quad
R_2 =  \begin{pmatrix} 0 & 0 & 1 & 0  \end{pmatrix}
\\
R_3 &=& \begin{pmatrix} 0 & 1 & 0 & 0  \end{pmatrix}
\quad
R_4 =  \begin{pmatrix} 0 & 0 & 0 & 1  \end{pmatrix},
\end{eqnarray*}
and one can easily see that $\widetilde{\Phi}(\rho)= \tr \rho= 1$ for all $\rho\in\mathcal L(\mathbb{C}^2\otimes \mathbb{C}^2)$.

\end{example}

\begin{example}			
			Consider the 2-qubit phase flip channel $\Phi$ with (equally weighted) Kraus operators $\{I,Z_1\}$, where $Z_1$ is shorthand for $Z\otimes I_2$. The dilation Hilbert space here is 3-qubits in size, and the conjugate channel $\widetilde{\Phi} : \mathcal L(\mathbb{C}^2\otimes\mathbb{C}^2) \rightarrow \mathcal L(\mathbb{C}^2)$ is implemented with the following Kraus operators:

\begin{eqnarray*}
R_1 &=& \frac{1}{\sqrt{2}} \begin{pmatrix} 1 & 0 & 0 & 0 \\ 1 & 0 & 0 & 0  \end{pmatrix}
\quad
R_2 = \frac{1}{\sqrt{2}}\begin{pmatrix} 0 & 1 & 0 & 0 \\ 0 & 1 & 0 & 0  \end{pmatrix}
\\
R_3 &=& \frac{1}{\sqrt{2}} \begin{pmatrix} 0 & 0 & 1 & 0 \\ 0 & 0 & -1 & 0 \end{pmatrix}
\quad
R_4 = \frac{1}{\sqrt{2}}\begin{pmatrix} 0 & 0 & 0 & 1 \\ 0 & 0 & 0 & -1 \end{pmatrix}.
\end{eqnarray*}

 We have $\Phi(\rho) = \frac{1}{2}(\rho + Z_1 \rho Z_1)$ and $\widetilde{\Phi} (\rho) = \sum_{i=1}^4 R_i \rho R_i^*$ for all 2-qubit $\rho$.  \medskip
 It is clear that the code $\{ \ket{00}, \ket{01} \}$ is correctable for $\Phi$; in fact it is noiseless/decoherence-free. And thus we know it is private for the conjugate channel $\widetilde{\Phi}$.\medskip

 Indeed, one can check directly that every density operator $\rho$ supported on $\{ \ket{00}, \ket{01} \}$, satisfies
 \begin{eqnarray*}
 \widetilde{\Phi} (\rho) &=& \ket{+}\bra{+} = \frac{1}{2} (\ket{0} + \ket{1})(\bra{0} + \bra{1})\\
 &=&\frac12\begin{pmatrix} 1 &1 \\ 1 & 1 \end{pmatrix}.
\end{eqnarray*}
\end{example}

\subsection{Alternate Proof of Theorem~\ref{thm:KLAnalogue}}

The celebrated Knill-Laflamme theorem in quantum error correction \cite{KL97} gives testable conditions in terms of Kraus operators for determining whether a given code is correctable. Here, $\m C$ denotes a subspace of a Hilbert space and $P_{\m C}$ is the projection onto $\m C$. If $\m E$ is a channel with Kraus operators $\{E_i\}$, then $\m C$ is correctable for $\m E$ if and only if
\begin{equation}\label{KLcond}
P_{\m C}E_i^* E_jP_{\m C}=\lambda_{ij}P_{\m C},
\end{equation}
for some Hermitian matrix $\lambda=[\lambda_{ij}]_{ij}$ of complex numbers.

With both the conjugate channel machinery and Knill-Laflamme result in hand, we can arrive at the conclusion of Theorem~\ref{thm:KLAnalogue} from an alternate perspective.  Indeed, letting  $\{V_i\}_{i=1}^K$ be the Kraus operators of the channel $\Phi$ and $\{R_\mu\}_{\mu=1}^b$ be the Kraus operators of its conjugate channel $\tilde{\Phi}$ (as before),  and $\{\ket{f_\mu}\}$ any orthonormal basis for $\m H_B$, we compute
\begin{eqnarray}\label{eq:KrausConjDual}
R_\mu^* R_\nu=
\begin{pmatrix}
V_1^* \ket{f_\mu} &V_2^* \ket{f_\mu} &\dots &V_K^* \ket{f_\mu}
\end{pmatrix}
\begin{pmatrix}
\bra{f_\nu}V_1\\
\bra{f_\nu}V_2\\
\vdots\\
\bra{f_\nu}V_K
\end{pmatrix}=\sum_{i=1}^K V_i^*( \ket{f_\mu} \bra{f_\nu})V_i.  %\nonumber 
%=\Phi^*( \ket{f_\mu} \bra{f_\nu}).
\end{eqnarray}

We recall the fact that the code being private for $\Phi$ is equivalent to it being correctable for the conjugate channel $\tilde{\Phi}$.
Then, by the Knill-Laflamme conditions Eq.~(\ref{KLcond}) and the calculation of Eq.~(\ref{eq:KrausConjDual}), we have for all $\mu, \nu\in \{1,\dots, b\}$ that $\m C$ is private for $\Phi$ if and only if
\begin{eqnarray*}
\lambda_{\mu \nu}P_{\m C}&=&P_{\m C}R_\mu^* R_\nu P_{\m C}  
= P_{\m C}\sum_{i=1}^K V_i^*( \ket{f_\mu} \bra{f_\nu})V_iP_{\m C} 
= P_{\m C}\Phi^\dagger( \ket{f_\mu} \bra{f_\nu})P_{\m C}.
\end{eqnarray*}
And as the identity holds for arbitrary matrix units $\ket{f_\mu} \bra{f_\nu}$, by linearity it extends to all operators $X$ as in Theorem~\ref{thm:KLAnalogue}.

\section{Outlook}
We have given an introduction to private quantum codes and connected them with the study of higher rank numerical ranges. On the other hand, higher rank numerical ranges have proved to be interesting objects of study in their own right. They have also been a useful tool for studying quantum error correcting codes, and this work suggests the same could be true in the study of private codes. For brevity we have focused on private subspaces here, but as in the case of quantum error correction there is also a notion of private subsystems that can be connected with higher rank numerical ranges. All of these topics warrant further investigation. 

\vspace{0.1in}

{\noindent}\emph{Acknowledgements.} D.W.K. was supported by NSERC Discovery Grant 400160 and NSERC Discovery Accelerator Supplement 400233. S.P. was supported by an NSERC Doctoral Scholarship.

\label{lastpage}

\end{document}